\newtheorem{lem}{Lemma}
\newtheorem{cor}{Corollary}
\newtheorem{thm}{Theorem}
\newtheorem*{prop1.1}{Proposition 1.1 of \citep{stan73}}
\newtheorem*{thm1.2}{Theorem 1.2 of \citep{stan73}}
\theoremstyle{definition}
\newtheorem{defi}{Definition}
\newtheorem{eg}{Example}
\title{Chromatic Polynomial and Heaps of Pieces }
\author{Bishal Deb\footnote{This work was done by the author during his undergraduate stay in Chennai Mathematical Institute, H1, SIPCOT IT Park, Siruseri Kelambakkam, India - 603103.} - \texttt{bishal@gonitsora.com}\\Laboratoire d’Informatique Gaspard Monge (LIGM), \\University of Paris-Est Marne-la-Vallée (UPEM),\\5 Boulevard Descartes, 77420 Champs-sur-Marne, France. }
\date{\today}							
\begin{document}

\maketitle

\begin{abstract}

Stanley in his paper [Stanley, Richard P.: Acyclic orientations of graphs In: Discrete Mathematics 5 (1973), Nr. 2, S. 171–178.] provided interpretations of the chromatic polynomial when it is substituted with negative integers. Greene and Zaslavsky interpreted the coefficients of the chromatic polynomial in [Greene, Curtis ; Zaslavsky, Thomas: On the interpretation of Whitney numbers through arrangements of hyperplanes, zonotopes, non-Radon partitions, and orientations of graphs. In: Transactions of the American Mathematical Society 280 (1983), jan, Nr. 1, S. 97–97.]. We shall develop an involution on factorisations of heaps of pieces and using this involution, we shall provide bijective proofs to results from both the papers.
\end{abstract}

\section{Introduction}

Viennot in \citep{xv86} developed the theory of heaps of pieces based on the theory of commutation monoids developed by Cartier and Foata in \citep{Cartier1969}. This theory has been used to solve several problems in combinatorics and establish several bijections. In this paper, we shall develop an involution on factorisations of heaps. We use it to provide new bijective proofs to results involving chromatic polynomials of graphs from \citep{stan73}. We then interpret the coefficients of chromatic polynomials in terms of heaps of pieces and establish bijections developed in \citep{Greene1983}.

A succint introduction to heaps of pieces for our purpose is provided in Section~\ref{intro}. We shall then fix some notation in Section~\ref{notation}. In Section~\ref{racks} we shall look at the fundamental lemma using which we provide new bijective proofs to several theorems on chromatic polynomials. We shall postpone the proof of the results to Appendix as it only involves case checking. In Section~\ref{stan73}, we shall look at the proofs of reciprocity theorems from \citep{stan73}. In Section~\ref{coefficients} we shall provide interpretations to coefficients of chromatic polynomial in terms of heaps of pieces.

\section{An Introduction to Heaps of Pieces}\label{intro}

This section is a short introduction to the theory of Heaps of Pieces, which were first introduced in \citep{xv86}. For a more detailed survey, refer to the course notes in the format of a video book on Heaps of Pieces at \citep{heaps}.

We begin with a set $P$, we call it the set of basic pieces and each element of this set as a piece. We have a relation $\mathscr{C}$ on this set that is reflexive and symmetric i.e., for $a,b\in P$, $a\mathscr{C}a$ and if $a\mathscr{C}b$ then $b\mathscr{C}a$. We call $\mathscr{C}$ as the \emph{concurrency} or \emph{dependency} relation, and if $a\mathscr{C}b$ for some $a,b\in P$, we say that $a$ is dependent on $b$.\\
 
\begin{defi}[Poset definition of Heaps]\label{poset}
A \textit{heap} on $P$ is the tuple $((E,\leq), \epsilon)$ where $(E,\leq)$ is a finite poset and $\epsilon:E\rightarrow P$ such that 

\begin{itemize}
\item[(i)] For every $a,b\in E$ such that $\epsilon(a) \mathscr{C}\epsilon(b)$ either, $a\leq b$ or $b\leq a$ i.e., $a$ and $b$ are comparable.

\item[(ii)] For every $a,b \in E$ such that $b$ covers $a$, $\epsilon(a) \mathscr{C}\epsilon(b)$.
\end{itemize}

\end{defi}

We call the elements of $E$ as the \emph{pieces} of $E$. When $a\leq b$ we say that $a$ \emph{is below} $b$ or $b$ \emph{is above} $a$. We call $\epsilon$ to be the \emph{projection map}.

We often refer to $E$ as the heap when the order relation and $\epsilon$ is understood.\\

\begin{eg}Let $G  = (V,E)$ be a graph. We consider $V$ to be the set of basic pieces with dependency relation given by edge relations and each vertex is dependant on itself. The Hasse diagram of a heap on the path graph with four vertices $P_4$ is given in Figure~\ref{first figure}. \\
\end{eg}

\begin{figure}[h]
\begin{center}
\includegraphics[scale=0.35]{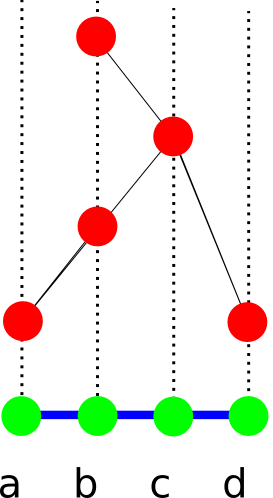}
\caption{Heap on $P_4$. Here $\{a,b,c,d\}$ is the set of vertices of the graph.}\label{first figure}
\end{center}
\end{figure}

\begin{defi}[Subheap] A \textit{subheap} of a heap $((E,\leq) , \epsilon)$ is $((F,\leq'),\epsilon')$ where $(F,\leq')$ is an induced subposet of $(E,\leq)$, and $\epsilon'$ is the restriction of $\epsilon$ to $F$.\\
 
\end{defi}

\begin{defi}[Multiplication of two heaps] For heaps $((E,\leq) , \epsilon)$ and $((F,\leq'),\epsilon')$ on $P$, we define $((E,\leq) , \epsilon)\odot ((F,\leq'),\epsilon')$ as the heap $((H, \leq''),\epsilon'')$ where 

\begin{itemize}
\item[--] $H$ is the disjoint union of $E$ and $F$.
\item[--] $\epsilon''$ is the map such that restricted to $E$ and $F$, it is $\epsilon$ and $\epsilon'$ respectively.
\item[--] $\leq''$ is the transitive closure of the following relations

\begin{enumerate}[(i)]
\item $a,b\in E$ and $a\leq b$,

\item $a,b\in F$ and $a\leq'b$,
\item $a\in E,b\in F$ and $\epsilon(a)\mathscr{C}\epsilon(b)$.
\end{enumerate}
\end{itemize}
\end{defi}

When the order relations and projection maps are understood we then just denote the product of $E$ and $F$ by $E\odot F$.

The product $\odot$ is associative and thus the set of all heaps on $P$ forms a monoid with generators, the heaps with just one piece. We use $\mathcal{H}(P,\mathscr{C})$ to denote this monoid.

We can also view a heap geometrically as:\\

\begin{defi}[Geometric definition of Heaps]\label{geometric} A \textit{heap} is a finite subset $E\subset P\times\mathbb{N}$ such that the following conditions hold:

\begin{enumerate}[(i)]
\item For $(a,i),(b,j)\in E$, if $a\mathscr{C}b$ then $i\neq j$.

\item For $(a,i)\in E$ if $i>0$, then there exists $ b\in P$ such that $a\mathscr{C}b$ and $(b,i-1)\in P$.
\end{enumerate}

\end{defi}

All elements with second entry $i\in \mathbb{N}$ are said to be in level $i$.\\

\begin{eg} Figure~\ref{levels} is the illustration of Example~1 with levels.\\\end{eg}

\begin{figure}[h]
\begin{center}
\includegraphics[scale=0.30]{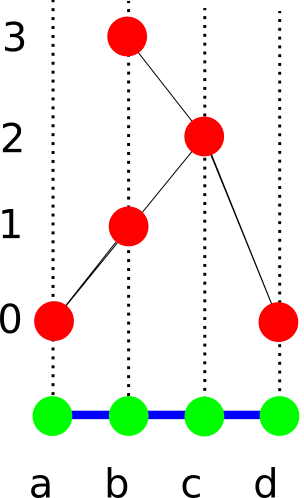}
\caption{Heap on $P_4$.}\label{levels}
\end{center}
\end{figure}

Definitions~\ref{poset} and \ref{geometric} are equivalent as sets: given a heap $((E,\leq),\epsilon)$ of Definition~\ref{poset}, we take all the minimal pieces in $((E,\leq),\epsilon)$ and put them at level $0$. Thus, we get that $E = T_0\odot E_1$ where $T_0$ is the set of minimal pieces of $E$. We put the minimal pieces of $E_1$ in level 1 and so on. Given a heap of Definition~\ref{geometric}, we take $\epsilon$ to be the projection of pieces to $P$. For pieces $(a,i),(b,j)$ with $i<j$ and $a\mathscr{C}b$ we take $(a,i)< (b,j)$.\\

\begin{defi}[Cartier-Foata Monoids or Commutation Monoids] Let $P^*$ be the free monoid generated by $P$. Let $C$ be a relation on $P$ which is symmetric and irreflexive. Let $\equiv_C$ denote the commutation relation on $P^*$ generated by the commutations $ab=ba$ iff $aCb$. Then $P^*/\equiv_C$ is a \textit{Cartier-Foata monoid} or a \textit{commutation monoid}.

\end{defi}

Notice that for a set of basic pieces $P$, with dependency relation $\mathscr{C}$, its complement $C = \mathscr{C}^c$ is a commutation relation. We state the following theorem without proof (See Proposition~3.4 in \citep{xv86}):\\

\begin{thm}
The map $\phi: P^*/\equiv_C\to \mathscr{H}(P,\mathscr{C})$ sending $a_1\ldots a_n\to a_1\odot\ldots\odot a_n$ is an isomorphism of monoids.
\end{thm}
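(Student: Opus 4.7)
The plan is to verify that $\phi$ is (i) well-defined on equivalence classes, (ii) a monoid homomorphism, (iii) surjective, and (iv) injective. The first two are essentially formal, the third follows from an induction on the size of the heap, and the fourth is the substantive step, resting on the structure of linear extensions of a finite poset.

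For well-definedness, since $\equiv_C$ is generated by the relations $ab=ba$ for $aCb$, it suffices to check $a\odot b=b\odot a$ whenever $aCb$. Unfolding the definition of $\odot$, the heap $a\odot b$ consists of two pieces labelled $a$ and $b$; since $aCb$ means $a$ and $b$ are not in $\mathscr{C}$, clause (i) of Definition~\ref{poset} does not force comparability, and clause (iii) of the transitive-closure definition of $\odot$ contributes no relation between them, so the two pieces form a labelled two-element antichain. The same holds for $b\odot a$. That $\phi$ is a monoid homomorphism is immediate from the associativity of $\odot$ and the fact that the empty word maps to the empty heap.

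For surjectivity, fix a heap $H=((E,\leq),\epsilon)$ and choose any linear extension $x_1,\ldots,x_n$ of $(E,\leq)$. I claim $H=\epsilon(x_1)\odot\cdots\odot\epsilon(x_n)$, by induction on $n$. The base case is trivial; otherwise $x_1$ is minimal in $E$ and the induced subheap $H'$ on $E\setminus\{x_1\}$ has $x_2,\ldots,x_n$ as a linear extension. One then checks $H=\epsilon(x_1)\odot H'$: the relations added by clause (iii) of $\odot$ together with those of $H'$ generate precisely the order of $H$, using clause (ii) of Definition~\ref{poset} applied to the elements covering $x_1$ in $H$.

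The main obstacle is injectivity, where the combinatorial content lies. Suppose $\phi(a_1\cdots a_n)=\phi(b_1\cdots b_m)$, so the products are equal as heaps $H$; the number of pieces forces $n=m$, and by the surjectivity argument both words encode linear extensions of the poset of $H$. The key classical fact is that any two linear extensions of a finite poset can be transformed into one another by a sequence of swaps of adjacent incomparable elements — provable by induction on the first position at which the two extensions disagree. Each such swap exchanges two consecutive pieces $x_i,x_{i+1}$ that are incomparable in $H$; by clause (i) of Definition~\ref{poset} (contrapositively), their projections therefore satisfy $\epsilon(x_i)\,C\,\epsilon(x_{i+1})$, so the swap is a legal commutation in $P^*/\equiv_C$. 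Chaining these swaps yields $a_1\cdots a_n\equiv_C b_1\cdots b_n$, completing the proof that $\phi$ is a bijection.
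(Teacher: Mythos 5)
Your argument is correct in substance, but note that the paper itself offers no proof at all here: the theorem is stated with a pointer to Proposition~3.4 of Viennot's paper, so there is nothing internal to compare against. What you have written is essentially the standard Cartier--Foata/Viennot argument made self-contained: well-definedness reduces (via associativity of $\odot$) to the two-letter case $a\odot b=b\odot a$ for $aCb$; surjectivity comes from peeling off a minimal piece along a linear extension; and injectivity rests on the classical fact that any two linear extensions of a finite poset are connected by adjacent transpositions of incomparable elements, each of which is a legal commutation by the contrapositive of clause~(i) of Definition~\ref{poset}. Two small points would make the write-up airtight. First, in the surjectivity induction you should say explicitly why $H'=H\setminus\{x_1\}$ is again a heap: clause~(ii) could in principle fail for an induced subposet, but since $x_1$ is minimal it cannot lie strictly between two remaining elements, so covers in $H'$ are covers in $H$ and the dependency condition is inherited. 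Second, in the injectivity step, ``the products are equal as heaps'' means equal up to an isomorphism of labelled posets, so you should transport the position order $1<2<\cdots<n$ of the word $a_1\cdots a_n$ along that isomorphism to obtain a genuine linear extension of the single poset $H$ carrying the label sequence $a_1,\ldots,a_n$; with that said, the swap-chain argument goes through verbatim. With these details added, your proof is a complete and valid substitute for the external citation.
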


Thus, each heap on $P$ can be represented as words in a commutation class of $P^*/\equiv_C$. We mention two ways of representing heaps as words.\\

\begin{lem}[Cartier-Foata Normal Form]

Given a heap $E$ it can be represented as product of blocks $[w_0][w_1]\ldots[w_k]$ where each $w_i\in P^*$ and the following hold:

\begin{itemize}
\item The letters in $w_i$ commute pairwise.
\item For each letter $a$ in $w_i$ there is a letter $b$ in $w_{i+1}$ such that $ab\neq ba$.
\end{itemize}
This representation of blocks is unique upto commutations of the letters in $w_i$.
\end{lem}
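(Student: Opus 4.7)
My plan is to prove existence and uniqueness by induction on $|E|$, with the top block at each step being forced to consist of the projections of the maximal pieces of $(E,\leq)$.

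For existence, I will let $U\subset E$ be the set of maximal elements and let $w_k$ be any word listing their projections. Since any two maximal elements are incomparable, clause~(i) of Definition~\ref{poset} forces their projections to be pairwise non-dependent, so they commute. By induction the smaller heap $E\setminus U$ admits a normal form $[w_0]\cdots[w_{k-1}]$; recursively constructing it by the same scheme, I may assume $w_{k-1}$ consists of the projections of the maximal elements of $E\setminus U$. I will then claim $[w_0]\cdots[w_{k-1}][w_k]$ works for $E$. The commutation clause holds in each block, and the dependency clause for $i<k-1$ is inherited from induction. For $i=k-1$: a piece $a\in w_{k-1}$ is maximal in $E\setminus U$ (by construction) yet not in $E$, so some $c\in U$ satisfies $a<c$. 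The crucial point is that $c$ can be chosen to cover $a$: if $d$ satisfied $a<d<c$, then $d\notin E\setminus U$ (otherwise the maximality of $a$ there would fail), so $d\in U$, violating the antichain property of $U$. Clause~(ii) of Definition~\ref{poset} then supplies $\epsilon(a)\mathscr{C}\epsilon(c)$.

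For uniqueness, I will show that in any valid representation $[w_0]\cdots[w_k]$ the block $w_k$ coincides, up to commutations of letters, with the projections of the maximal elements of $E$. Given $a\in w_k$, if some $b>a$ existed in the heap, a saturated chain of covers from $a$ to $b$ would give a cover $x_1$ of $a$ lying at a later position in the word and hence again in $w_k$. Clause~(ii) of Definition~\ref{poset} would force $\epsilon(a)\mathscr{C}\epsilon(x_1)$, while the commutation inside $w_k$ makes them independent, a contradiction. Conversely, if a maximal piece of $E$ lay in some $w_i$ with $i<k$, the second property of the normal form would produce a dependent partner in $w_{i+1}$, necessarily strictly above it in the heap, contradicting maximality. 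Thus $w_k$ is forced, and $[w_0]\cdots[w_{k-1}]$ is then forced to be a normal form of $E\setminus w_k$, so uniqueness descends by induction.

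The hardest part will be the cover argument inside the existence step: showing that the element $c\in U$ sitting above $a\in w_{k-1}$ can be chosen to cover $a$, so that clause~(ii) (rather than only the weaker comparability of~(i)) delivers the needed dependence. This is exactly where the antichain property of $U$ and the characterization of $a$ as maximal in $E\setminus U$ both come into play.
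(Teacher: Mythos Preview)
The paper does not supply its own proof of this lemma; it simply refers the reader to Corollary~3.5 of \citep{xv86} and to \citep{Cartier1969}. So there is no in-paper argument to compare yours against.

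Your argument is correct. Peeling off the antichain $U$ of maximal pieces works because $E\setminus U$ is a down-set (hence again a heap), and the cover step you flag as the crux goes through exactly as you say: any strict intermediate $d$ between $a\in w_{k-1}$ and $c\in U$ would have to lie in $U$ (else $a$ is not maximal in $E\setminus U$) and would then violate the antichain property of $U$; so $c$ covers $a$ and clause~(ii) yields the required dependence. The uniqueness half is also fine once one uses that in any word representing $E$ a piece strictly above another occurs strictly later, so the last block can contain only maximal pieces, while the second bullet of the lemma prevents a maximal piece from sitting in an earlier block.

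One point worth noting: you peel from the \emph{top}, which is the correct reading of the lemma as literally written (dependence from $w_i$ into $w_{i+1}$). The normal form the paper actually uses elsewhere --- in the geometric Definition~\ref{geometric} and in Example~3 --- is the standard one, peeling from the \emph{bottom} (each letter of $w_i$ with $i>0$ has a non-commuting partner in $w_{i-1}$). Indeed the paper's own $[ad][b][c][b]$ on $P_4=a\text{--}b\text{--}c\text{--}d$ does \emph{not} satisfy the condition as stated, since $d\in w_0$ is not adjacent to $b\in w_1$; your construction would instead output $[a][bd][c][b]$, which does. So you have proved precisely the statement given, and the discrepancy is a slip in the paper's formulation of the second bullet rather than in your proof.
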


For a proof of the above lemma see Corollary~3.5 in \citep{xv86}. It is due to Cartier and Foata proved in \citep{Cartier1969}. Note that $i^{th}$ block in the Cartier-Foata normal form of $E$ denotes the elements in $i$ level of $E$ as per the geometric representation of heap.\\

\begin{eg} In Example~1 the Cartier-Foata normal form of the heap is $[ad][b][c][b]$.\\\end{eg}

\begin{lem}[Knuth Normal Form or Lexicographic Normal Form]

Given a heap $E$ on $P$ and a total ordering on $P$ it can be representated uniquely as $a_1\ldots a_n$ such that $a_1$ is the smallest minimal element of $E$ which gives $E = a_1\odot E_1$, $a_2$ is the smallest minimal element of $E_1$ and so on.\\

\end{lem}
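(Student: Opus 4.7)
The plan is an induction on $n = |E|$ that produces the word greedily: at each stage, peel off the smallest minimal piece of the current subheap (with respect to the total order on $P$) and iterate on what remains. Uniqueness of the word is then automatic, since every letter is deterministically specified by the subheap in which it is chosen.

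For the existence step, let $T_0$ denote the set of minimal elements of the finite poset $(E,\leq)$, which is nonempty. Condition~(i) of Definition~\ref{poset} implies that distinct minimal pieces must project to pairwise independent elements of $P$ (otherwise they would be forced to be comparable), so $T_0$ factors as a commuting product of its singletons. The paragraph following Definition~\ref{geometric} already records that $E = T_0 \odot E'$, where $E'$ is the subheap on $E \setminus T_0$. Letting $a_1$ be the piece of $T_0$ with $\epsilon(a_1)$ smallest, I would pull $a_1$ to the front and write $E = a_1 \odot E_1$, where $E_1$ is the subheap on $E \setminus \{a_1\}$. Applying the inductive hypothesis to $E_1$ yields the desired word $a_1 a_2 \ldots a_n$.

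Uniqueness comes for free once one observes that the first letter of any word representing a heap $E$ in the commutation monoid must project from a minimal piece of $E$: writing $E = a \odot F$ forces every element of $F$ to be either above $a$ (if $\mathscr{C}$-dependent on $a$) or incomparable to $a$, so nothing lies strictly below $a$. Consequently, if some word satisfies the stated smallness conditions, its first letter is forced to equal $a_1$, and induction does the rest.

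The one nontrivial verification, and the main obstacle, is checking that the equality $E = a_1 \odot E_1$ holds on the nose, i.e., that the transitive closure in the definition of $\odot$ neither creates spurious new comparisons nor misses any existing ones when $a_1$ is a single minimal piece. This is a routine definition chase, but it is the place where some care is needed; once it is granted, the remainder of the argument is essentially bookkeeping.
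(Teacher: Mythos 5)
The paper does not actually prove this lemma: it simply cites Anisimov and Knuth \citep{Anisimov1979}, so there is no in-paper argument to match yours against. Your greedy induction is a correct and self-contained substitute, and it is the standard way to establish the lexicographic normal form. Two small points are worth making explicit. First, "the piece of $T_0$ with $\epsilon(a_1)$ smallest" is well-defined because $\mathscr{C}$ is reflexive: two distinct minimal pieces with the same projection would be dependent and hence comparable by condition~(i), which is impossible, so minimal pieces have pairwise distinct (and pairwise independent) projections. Second, the verification you defer, $E = a_1 \odot E_1$, does go through and for exactly the reason you hint at: since $a_1$ is minimal, removing it creates no new cover relations in $E_1$ (nothing of $E$ lies strictly below $a_1$, so no chain through $a_1$ collapses), hence $E_1$ is again a heap with the induced order; conversely, any relation $a_1 < b$ in $E$ is recovered in the transitive closure by taking a saturated chain from $a_1$ to $b$ and using condition~(ii) on its first cover, while condition~(i) plus minimality shows every relation the product imposes ($a_1$ below each dependent piece) already holds in $E$. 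Your uniqueness argument is also sound, but note it silently uses that the cofactor in $E = a_1 \odot E_1$ is determined by $E$ and $a_1$; this follows from your own observation that nothing lies below $a_1$ in a product $a_1 \odot F$, so $F$ is exactly the induced subheap on the complement of $a_1$, and induction then forces every subsequent letter. With those two verifications written out, your proof is complete and arguably more useful to the reader than the paper's bare citation.
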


The above lemma was proved by Anisimov and Knuth in \citep{Anisimov1979} in the context of Commutation Monoids.\\

\begin{eg} In Example~2 if we take the total ordering as $a<b<c<d$, then the lexicographic normal form of the heap is $abdcb$.\\\end{eg}

\begin{defi}[Trivial Heaps] A non-empty heap $E$ is called \textit{trivial} when each piece of the heap commutes with every other piece of the heap or in other words, all pieces of the heap lie at level 0.\\

\end{defi}

\begin{defi}[Multilinear Heaps] A heap $E$ is called \textit{multilinear} if $\epsilon:E\to P$ is a bijection where $P$ is the set of basic pieces.\\

\end{defi}

\begin{eg}\label{multilinear} The heap in Figure~\ref{multi} is an example of a multilinear heap.\\\end{eg}

\begin{figure}[h]
\begin{center}
\includegraphics[scale=0.35]{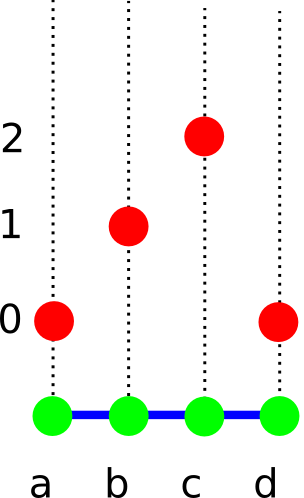}
\end{center}

\caption{A Multilinear Heap on the graph $P_4$.}\label{multi}
\end{figure}

\begin{defi}[Antipyramids] A heap is called an \textit{antipyramid} if it has a unique piece in lowermost level.

\end{defi}

\section{Notation and Terminology}\label{notation}

Our graph will be denoted by $G=(V,E)$ where $n$ is the number of vertices and $m$ is the number of edges. We call a decomposition of $F$ into subheaps $F_0,\ldots, F_{k-1}$ such that $F = F_0\odot\ldots\odot F_{k-1}$ a \emph{factorisation} or a \emph{layer factorisation} of heap $F$. The heaps $F_0,\ldots, F_{k-1}$ are called the \emph{factors} or \emph{layers} of heap $F$. A trivial layer factorisation of heap $F$ is a factorisation in which each of the factors are trivial heaps. We also refer to a trivial layer factorisation of a heap as  a \emph{rack} of the heap. 

Given a total order on the set of basic pieces, we can order the pieces of $F$ in the order in which they are written in the lexicographic normal form when read from left to right. We call this order the \emph{lexicographic order} of the heap. The lexicographic order is a linear extension of the poset order of the heap. There is a rack such that its layer $i-1$ has only the $i^{th}$ piece of $F$. We call it the \emph{lexicographic rack}. For a rack $T$ let $\#(T)$ denote the number of layers of $T$. For example, the lexicographic rack of the heap of Example~1 is $a\odot b\odot d\odot c\odot b$.

In a rack we call a piece \emph{lonely} if it is the only piece in its layer. For a heap $F$ let  $\beta_F(k)$ denote the number of racks on $F$ with $k$ layers. Let $\mathfrak{b}_F(k)$ denote the number of layer factorisations of $F$ with $k$ layers. When $F$ is understood we may choose to ignore the subscript $F$. 

\section{The Heaps and Racks lemma}\label{racks}

We now come to the most important part of this paper. Consider the following algorithm on the racks of a fixed heap $F$:

\begin{enumerate}
\item We first number the pieces in the rack $T$ from $0$ according to the lexicographic order.

\item We define the \emph{Transfer Set} to be the set consisting of two kinds of pieces:
\begin{enumerate}
\item Pieces which are not lonely.
\item Lonely pieces whose number on them due to Step 1. do not match the number of its layer.
\end{enumerate}
If the transfer set is non-empty we call its smallest piece the \emph{Transfer Piece}.

\item If the Transfer Set is empty we return $T$ as the output of the algorithm.

\item If the transfer piece is lonely then we put it in the layer below it. If it is not lonely, we create a new layer with the transfer piece as the only piece just above its old layer.\\
\end{enumerate}

We call this algorithm the \emph{heaps and racks involution}.

\textbf{Example.} The racks in Figure~4 is an example of the heaps and racks involution applied on two racks on the heap of Figure~1. When the algorithm is applied on the left rack we get the right rack and when the algorithm is applied to the right rack we get the left rack.

\begin{figure}[h]
\begin{center}
\includegraphics[scale=0.30]{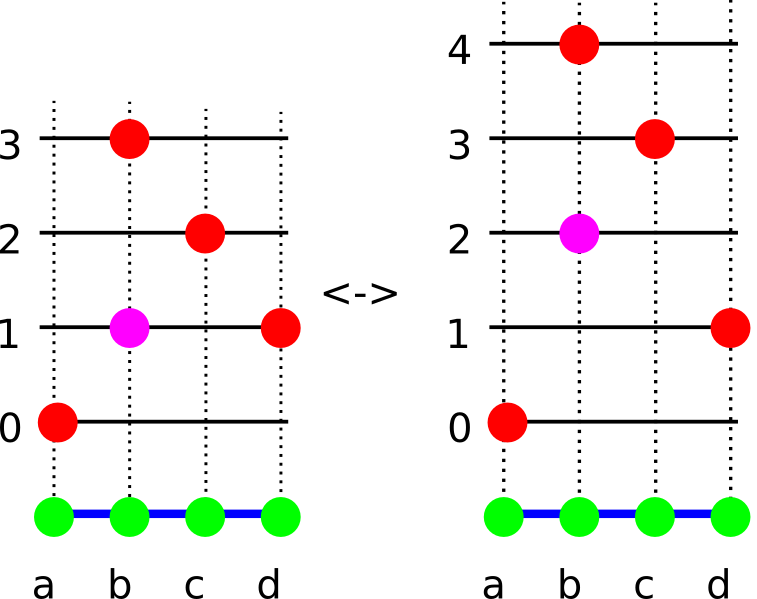}
\end{center}
\caption{Heaps and Racks Involution applied to two rack. Here the order on the pieces are $a<b<c<d$. The purple piece in the heap denotes the transfer piece.}
\end{figure}

\begin{lem}\label{algo} The result of the above algorithm is a rack of $F$. 
\end{lem}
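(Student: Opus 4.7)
The plan is to split into cases by whether the transfer piece is lonely in its layer, and in each case to verify that the output is still a trivial layer factorisation of $F$. Write $T = F_0 \odot F_1 \odot \cdots \odot F_{s-1}$ and let $q_k$ (of number $k$) be the transfer piece, sitting in $F_\ell$. The first observation I would make is that, because $k$ is the smallest number in the transfer set, every piece $q_i$ with $i<k$ is lonely in its own correct layer, so $F_i = \{q_i\}$ for $0 \le i < k$. In particular $\ell \ge k$, and deleting those singleton layers exhibits $F_k \odot F_{k+1} \odot \cdots \odot F_{s-1}$ as a rack of the induced sub-heap $F' := F \setminus \{q_0, \ldots, q_{k-1}\}$; moreover, by the Knuth normal form procedure, $q_k$ is the smallest minimal piece of $F'$.

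If $q_k$ is not lonely, the algorithm replaces $F_\ell$ by the two factors $F_\ell \setminus \{q_k\}$ and $\{q_k\}$. Since $F_\ell$ is trivial its pieces commute pairwise, so $F_\ell = (F_\ell \setminus \{q_k\}) \odot \{q_k\}$ in the heap monoid; both new factors are trivial and non-empty, hence the output remains a rack of $F$.

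If $q_k$ is lonely, then clause (b) of the definition of the transfer set gives $k \ne \ell$; combined with $\ell \ge k$, this yields $\ell > k \ge 0$, so layer $F_{\ell - 1}$ exists. The algorithm merges $q_k$ into $F_{\ell - 1}$, which produces a rack of $F$ provided $q_k$ commutes with every piece of $F_{\ell - 1}$. This is the crux of the argument: because $q_k$ is a minimal piece of $F'$ but sits in relative layer $\ell - k \ge 1$ of the sub-rack $F_k \odot \cdots \odot F_{s-1}$, no piece in an earlier layer of that sub-rack can be dependent on $q_k$ --- otherwise clause (iii) of the definition of $\odot$ would place such a piece strictly below $q_k$ in $F'$, contradicting minimality. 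In particular $q_k$ commutes with every piece of $F_{\ell - 1}$, so $F_{\ell - 1} \cup \{q_k\}$ is trivial and $F_{\ell - 1} \odot \{q_k\} = F_{\ell - 1} \cup \{q_k\}$. Replacing the two consecutive factors $F_{\ell - 1}, \{q_k\}$ by their union therefore yields a rack of $F$ with one fewer layer.

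The only step requiring real work is the commutation claim in the lonely case; once that is in hand, both cases finish by a single refactorisation inside an already-trivial heap. The potential subtlety I would flag --- namely that $F_k \odot \cdots \odot F_{s-1}$ really is a rack of the induced subheap $F'$ and not merely of some other heap with the same underlying set --- is handled by cancellativity of the heap monoid applied to the equality $\{q_0\} \odot \cdots \odot \{q_{k-1}\} \odot (F_k \odot \cdots \odot F_{s-1}) = F = \{q_0\} \odot \cdots \odot \{q_{k-1}\} \odot F'$ coming from the Knuth normal form.
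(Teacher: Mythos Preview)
Your proof is correct. The case split (lonely versus not lonely) and the not-lonely case match the paper exactly. In the lonely case the paper argues the commutation claim by an ad hoc contradiction: assuming some $q \in L_{j-1}$ fails to commute with $p$, it traces labels to find a piece $r$ with label $j$ that is forced out of layer $j$ (since $p$ occupies that layer alone), hence lies in the transfer set with smaller label than $p$, a contradiction. Your argument instead first records the structural fact that $F_i = \{q_i\}$ for $i<k$, passes to the residual heap $F' = F_k \odot \cdots \odot F_{s-1}$, and then invokes minimality of $q_k$ in $F'$: anything in a strictly earlier factor that depended on $q_k$ would sit strictly below it in $F'$. This is a genuinely cleaner route to the same conclusion --- it replaces the label-chasing with a single appeal to the Knuth normal form and the definition of $\odot$. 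The only extra ingredient you use that the paper does not is cancellativity of the heap monoid (to identify $F_k \odot \cdots \odot F_{s-1}$ with $F'$); this is a standard fact about commutation monoids, so it is a fair appeal, though one could also see it directly by peeling off the minimal singletons $q_0, \ldots, q_{k-1}$ one at a time.
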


We prove the lemma in the Appendix.

Let $R(F)$ be the set of racks of $F$. Then the algorithm that we described above gives a function $f: R(F) \rightarrow R(F)$.\\

\begin{lem}{(Heaps and Racks Lemma)} Let $f: R(F) \rightarrow R(F)$ be the algorithm above performed on racks of $F$. Then, $f$ is an involution whose only fixed point is the lexicographic rack of $F$. Further, $|\#(T) - \#(f(T))|\leq 1$ with $\#(T) = \#(f(T))$ if and only if $T$ is the lexicographic rack.

\end{lem}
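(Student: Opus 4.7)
The plan is to first identify the fixed points of $f$ directly, and then establish the involution property by exploiting a structural constraint that minimality of the transfer piece places on the rack. The fixed points are exactly the racks $T$ whose transfer set is empty, and by the definition of the transfer set this means that every piece of $T$ is lonely and every lonely piece's number coincides with its layer index. Equivalently, the piece numbered $i$ occupies layer $i$ alone for every $i$, which is precisely the lexicographic rack of $F$; conversely that rack clearly has empty transfer set. Hence the lexicographic rack is the unique fixed point.

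For any other rack $T$, let $p$ be the transfer piece, with number $k$ and lying in layer $\ell$. The key observation is that minimality of $p$ in the transfer set forces every piece numbered strictly less than $k$ to be outside the transfer set, hence lonely with number equal to its layer index. Thus the layers $0, 1, \ldots, k-1$ each contain exactly one piece (namely the one numbered with that index), so $p$ must lie in a layer with $\ell \geq k$. If in addition $p$ is lonely, then $\ell = k$ would place $p$ outside the transfer set, forcing $\ell \geq k+1$.

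I then split into two cases. In the lonely case, $f$ drops $p$ into layer $\ell - 1$, which is nonempty because $\ell - 1 \geq k$ lies outside the frozen block of layers $0, \ldots, k-1$. Thus $p$ becomes non-lonely in $f(T)$, the layer count drops by one, and since the pieces numbered $< k$ are entirely unaffected, $p$ is still the smallest transfer piece of $f(T)$. Re-applying $f$ therefore lifts $p$ back into a fresh singleton layer just above, restoring $T$. The non-lonely case is symmetric: $f$ inserts a new singleton layer containing $p$ just above layer $\ell$, placing $p$ alone in a layer of index $\ell + 1 > k$ so that $p$ persists as the transfer piece, and re-applying $f$ merges $p$ back down, recovering $T$. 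In both cases the layer count changes by exactly one, which combined with the fixed point analysis yields $|\#(T) - \#(f(T))| \leq 1$ with equality if and only if $T$ is the lexicographic rack.

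The main subtlety will be confirming that $p$ remains the smallest transfer piece of $f(T)$ after the move; this rests entirely on the frozen-prefix observation above, namely that pieces numbered $< k$ sit as lonely pieces in layers matching their indices and are untouched by a purely local modification at layer $\ell$. The well-definedness of each step, i.e.\ that the modified layer structure really is still a rack of $F$, is already granted by Lemma~\ref{algo}.
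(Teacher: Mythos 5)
Your proof is correct and follows essentially the same route as the paper: identify the lexicographic rack as the unique fixed point, and show in both the lonely and non-lonely cases that the transfer piece of $f(T)$ is again $p$, so the two moves undo each other and the layer count changes by exactly one off the fixed point. The only difference is presentational: you get the persistence of the transfer piece directly from the frozen-prefix observation (every piece numbered below $k$ sits lonely in the layer matching its number, forcing $\ell \geq k$, resp.\ $\ell \geq k+1$), whereas the paper's appendix establishes the same facts by contradiction in each sub-case.
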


From the description of the algorithm, it is clear that for $T \in R(F)$, $\#(T) = \#(f(T))$ if and only if $T$ is a fixed point of $f$. A rack $T$ is a fixed point if and only if all pieces are lonely and are in the same layer as its order. Thus, $T$ has to be the lexicographic rack.

If $T$ is not the lexicographic rack, then from the algorithm $f(T)$ will have one more layer than $T$ if the transfer piece is not lonely and one less layer than $T$ if it is lonely. Thus we get that $|\#(T) - \#(f(T))|\leq 1$ with $\#(T) = \#(f(T))$ if and only if $T$ is the lexicographic rack.

We postpone the remaining part of the proof of the lemma to the Appendix where we show that $f$ is an involution.\\

\begin{cor}

The following identity holds true for any fixed heap $F$,

\begin{equation}\label{inv}
\sum_{k\geq 1} (-1)^{k}\beta(k) = (-1)^{|F|}.
\end{equation}

\end{cor}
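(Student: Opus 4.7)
The plan is to use the Heaps and Racks Lemma as a sign-reversing involution on $R(F)$. I would rewrite the left-hand side as
\begin{equation*}
\sum_{k\geq 1}(-1)^{k}\beta(k) \;=\; \sum_{T\in R(F)}(-1)^{\#(T)},
\end{equation*}
since $\beta(k)$ counts racks of $F$ with exactly $k$ layers. Then I would partition $R(F)$ into orbits of the involution $f$.

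Next, I would observe that every non-fixed orbit is a pair $\{T,f(T)\}$, and by the Heaps and Racks Lemma such a pair satisfies $\#(f(T)) = \#(T)\pm 1$. Consequently the two contributions $(-1)^{\#(T)}$ and $(-1)^{\#(f(T))}$ cancel, so every non-fixed orbit contributes $0$ to the sum.

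It remains to evaluate the contribution of the unique fixed point, which by the lemma is the lexicographic rack of $F$. This rack has exactly one piece per layer, so it has $|F|$ layers and contributes $(-1)^{|F|}$. Summing, we obtain
\begin{equation*}
\sum_{T\in R(F)}(-1)^{\#(T)} \;=\; (-1)^{|F|},
\end{equation*}
which is the desired identity. There is no real obstacle here beyond having the Heaps and Racks Lemma in hand: the whole argument is a standard sign-reversing involution, and the only step that requires the lemma is the cancellation between paired orbits together with the identification of the fixed point and the count of its layers.
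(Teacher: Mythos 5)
Your proposal is correct and is exactly the argument the paper intends: the Heaps and Racks Lemma is used as a sign-reversing involution on $R(F)$, non-fixed racks cancel in pairs since their layer counts differ by one, and the unique fixed point (the lexicographic rack) has $|F|$ layers, giving $(-1)^{|F|}$. The paper leaves this deduction implicit after the lemma, but your write-up matches it step for step.
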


Note that this corollary is universal in the sense that it holds for all heaps. But in this paper we shall only be interested in multilinear heaps with the basic pieces as graph vertices.\\

\begin{cor}\label{lowerspecial} If the heaps and racks involution is applied to a rack with the largest piece in the bottom layer then the rack obtained has the largest piece in the bottom layer. Here the ordering is the inherent ordering on the vertices.

\end{cor}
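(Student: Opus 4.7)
The plan is to show that the transfer piece selected by the heaps and racks algorithm is never the largest piece $v_{\max}$. Once this is established, direct inspection of the two possible moves (sliding a lonely transfer piece down by one layer, or splitting off a non-lonely transfer piece into a new singleton layer inserted just above its current one) shows that the contents of every layer strictly below the transfer piece are untouched, and in particular layer $0$ retains every piece it had, except possibly the transfer piece itself. Thus if $v_{\max}$ lies in layer $0$ of $T$ and $v_{\max}$ is not the transfer piece, then $v_{\max}$ lies in layer $0$ of $f(T)$ as well.

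To show that $v_{\max}$ is never the transfer piece, I will exhibit a piece $v<v_{\max}$ belonging to the transfer set; since the transfer piece is the minimum of the transfer set, this suffices. If $T$ is the lexicographic rack then $f(T)=T$ and the statement is immediate, so assume the transfer set is nonempty. Let $\pi$ denote the lexicographic numbering and $\ell$ the layer function of $T$. If $v_{\max}$ is not alone in layer $0$, then any other piece $v$ in layer $0$ is non-lonely and hence in the transfer set, with $v<v_{\max}$. If $v_{\max}$ is alone in layer $0$ and $\pi(v_{\max})=0$, then $v_{\max}$ is lonely with number matching its layer, so $v_{\max}$ is not in the transfer set at all, forcing its smallest element to be a piece smaller than $v_{\max}$. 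Finally, if $v_{\max}$ is alone in layer $0$ and $\pi(v_{\max})\neq 0$, then bijectivity of $\pi$ gives a unique piece $v\neq v_{\max}$ with $\pi(v)=0$; since layer $0$ is occupied by $v_{\max}$ alone, $v$ sits in some layer $\ell(v)\geq 1$, and so $v$ lies in the transfer set either because it is non-lonely or because it is lonely with $\pi(v)=0\neq\ell(v)$. In every case the transfer piece is at most $v<v_{\max}$.

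The step I expect to require the most care is the last subcase, where I must combine the two clauses defining the transfer set and use bijectivity of $\pi$ onto $\{0,1,\ldots,|F|-1\}$ to locate a smaller candidate. A small secondary check is that when a lonely transfer piece slides down, it may land in layer $0$ next to $v_{\max}$, but this only enlarges layer $0$ and does not remove $v_{\max}$; and when a new layer is inserted, its index is at least $1$, so layer $0$'s indexing is unchanged. With these observations in hand the conclusion follows directly from the first paragraph.
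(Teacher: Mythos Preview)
Your argument is correct. The paper actually states this corollary without proof, presumably regarding it as immediate from the description of the involution, so there is nothing in the paper to compare against; your write-up supplies exactly the verification one would want.

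One minor point of precision: in the paper the transfer piece is the member of the transfer set with the smallest \emph{lexicographic number} $\pi(\cdot)$ (this is how the Appendix arguments are phrased), not necessarily the smallest piece in the vertex order. Your case analysis still goes through under that reading --- in the first case the witness $v$ you exhibit lies in layer~$0$, hence is minimal in $F$, and a minimal piece smaller than $v_{\max}$ in the vertex order necessarily precedes $v_{\max}$ in the lexicographic order of $F$ as well; in the third case you already have $\pi(v)=0<\pi(v_{\max})$; and in the second case $v_{\max}$ lies outside the transfer set altogether --- so the conclusion is unaffected.
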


\section{Stanley's Reciprocity Theorems}\label{stan73}

In this section we shall provide proofs to several results from \citep{stan73}. Several of these results were proved in \citep{heaps}. We provide new proofs to most of the results and we mention wherever the proof is not due to the author. We use the results from the previous section to obtain bijective proofs of the results.

\subsection{Orientation of Graphs and Multilinear Heaps}

The proof of the following lemma was discussed in the course by Viennot and is present in Chapter 5a of \citep{heaps}.\\

\begin{lem}\label{orientations and multilinear heaps} Let $G = (V,E)$ be an undirected simple graph. There is a bijection between orientations of G and multilinear heaps on $G$.

\end{lem}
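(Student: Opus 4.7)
The plan is to construct two mutually inverse maps between the set of multilinear heaps on $G$ and the set of orientations of $G$. Since $\epsilon : H \to V$ is a bijection for a multilinear heap $H$, I will tacitly transport the heap order along $\epsilon$ and regard $H$ as a partial order directly on $V$.

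For the forward map $\Phi$: given a multilinear heap $H$ viewed as a partial order on $V$, condition~(i) of Definition~\ref{poset} guarantees that the two endpoints of any edge $\{u,v\} \in E$ are comparable in $H$. I orient each such edge from $u$ to $v$ precisely when $u < v$ in $H$, producing an orientation $\Phi(H)$ of $G$. Well-definedness is immediate, since condition~(i) supplies the comparability needed to decide the direction of every edge.

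For the backward map $\Psi$: given an orientation $O$ of $G$, define a relation $\preceq$ on $V$ by declaring $u \preceq v$ whenever $u = v$ or there is a directed path from $u$ to $v$ in $O$, and set $\Psi(O) = ((V, \preceq), \mathrm{id}_V)$. I must then check that $\Psi(O)$ is a multilinear heap: that $\preceq$ is a partial order, and that both heap axioms hold. Condition~(i) is immediate, since any edge of $G$ is itself a one-step directed path and its endpoints are therefore $\preceq$-comparable. Condition~(ii), the subtler requirement, asserts that every cover relation $u \lessdot v$ corresponds to an edge of $G$: if the witnessing directed path from $u$ to $v$ contained any intermediate vertex $w$, then $w$ would lie strictly between $u$ and $v$ in $\preceq$, contradicting the covering property, so the witnessing path must be a single edge.

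Once $\Phi$ and $\Psi$ are established, what remains is to verify $\Phi \circ \Psi = \mathrm{id}$ and $\Psi \circ \Phi = \mathrm{id}$. One composition is almost tautological: a single directed edge is a directed path of length one, so $\Phi(\Psi(O))$ restores the orientation of each edge of $O$. For the other, I use the fact that in a multilinear heap $H$ every comparability $u < v$ is realised by a chain of covers, each of which is an edge of $G$ by condition~(ii), and each such cover edge is oriented upward in $\Phi(H)$ by construction; hence directed paths in $\Phi(H)$ recapture exactly the partial order of $H$. The main obstacle I anticipate is the verification of condition~(ii) for $\Psi(O)$ together with the partial-order check, as these are the steps where the graph structure on $V$ must be used carefully; once these structural properties are in place, the rest of the correspondence between heaps on $G$ and orientations of $G$ is bookkeeping.
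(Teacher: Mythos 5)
Your forward map is fine and is essentially the paper's map: orienting each edge upward in the heap order is the same as orienting it according to the lexicographic word, since that word is a linear extension of the heap poset. The gap is in the backward map, and with it in the scope of what you are proving. You take an \emph{arbitrary} orientation $O$ and define $\preceq$ as reachability, but this relation is a partial order only when $O$ is acyclic: a directed cycle destroys antisymmetry, so $\Psi(O)$ is not a heap for such $O$. You list ``the partial-order check'' as an anticipated obstacle but never carry it out, and it cannot be carried out in the generality you set up. Indeed, a bijection with \emph{all} orientations is impossible: $K_3$ has $2^3=8$ orientations but only $3!=6$ multilinear heaps. The lemma, as the paper's own proof and its use in Corollary~1.3 make clear, is a bijection between multilinear heaps and \emph{acyclic} orientations; the word ``acyclic'' is exactly what your writeup is missing.

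The repair is small but must be explicit: (a) note that $\Phi(H)$ is always acyclic, because every edge is oriented consistently with the partial order of $H$, so a directed cycle would force $u<u$; and (b) define $\Psi$ only on acyclic orientations, where reachability is reflexive, transitive and antisymmetric, hence a genuine poset, after which your verification of conditions (i) and (ii) of Definition~\ref{poset} and of the two composite identities goes through as you describe (your cover argument for (ii) is correct, and multilinearity guarantees that a cover relation between distinct elements corresponds to distinct adjacent vertices). With that repair your route is a legitimate alternative to the paper's: the paper constructs the inverse by repeatedly deleting the smallest source to build the lexicographic word (acyclicity is used there to guarantee a source exists at each step), whereas you build the heap order directly as the transitive closure of the orientation, which is more structural but is precisely the point where acyclicity must be invoked.
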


\begin{proof} Given a multilinear heap $H$ on graph $G$, let $W$ be its lexicographic word. Then, if there is an edge between vertices $u$ and $v$ we orient it from $u$ to $v$ if $u$ occurs to the left of $v$ in $W$, else we orient it from $v$ to $u$.

Clearly, this orientation that we get is acyclic. Thus, we get a map $\phi: \mathscr{M}(G)\to \mathscr{A}(G)$ where $\mathscr{M}(G)$ denotes the set of multilinear heaps on $G$ and $\mathscr{A}(G)$ denotes the set of acyclic orientations on $G$.

Now we give an algorithm to get the lexicographic word of a multilinear heap from an acyclic orientation. 

Begin with the empty word. At each step keep removing the source with the lowest order and concatenate the corresponding letter to the right of the word. Stop when there are no more vertices left in the graph.

The word gives a multilinear heap as there is exactly one letter for each vertex in the word formed. Let us call this map $\psi: \mathscr{A}(G)\to \mathscr{M}(G)$.

It is not difficult to see that $\phi$ and $\psi$ are inverses of each other. \end{proof}


\subsection{Graph Colouring and Racks of a Multilinear heap}

A proper colouring of a graph partitions the vertex set into independent sets. Each independent set corresponds to a trivial heap. Thus, if we fix an ordering on the colours we get a rack of a multilinear heap. Hence, colouring a graph with $\lambda$ colours is same as first picking a rack on graph of size $k\leq \lambda$ and then picking $k$ colours from the set of colours $\{1,\ldots, \lambda\}$.

Thus, we get an expression for the chromatic polynomial $\mathcal{X}_G(\lambda)$

\begin{equation}\label{chigeneral}
\mathcal{X}_G(\lambda) = \sum_{F \in\mathscr{M}(G)}\sum_{k\geq 0}\beta_F(k)\frac{\lambda(\lambda -1 )\ldots(\lambda - k+1)}{k!}.
\end{equation} This also gives, 

\begin{equation}\label{chi}\mathcal{X}_G(-1) = \sum_{F \in\mathscr{M}(G)}\sum_{k\geq 0}(-1)^k\beta_F(k).
\end{equation}

\noindent{\textbf{Corollary 1.3 from \citep{stan73}.} \emph{If $G$ is a graph with $n$ vertices, then $(-1)^n \mathcal{X}_G(-1)$ is equal to the number of acyclic orientations of $G$.}}

\begin{proof}
We want to show that $$\mathcal{X}_G(-1) = (-1)^{n} |\mathscr{A}(G)|.$$ But this is immediate from Equations~(\ref{inv}),~(\ref{chi}) and the bijection between acyclic orientations and multilinear heaps in Lemma~\ref{orientations and multilinear heaps}.\end{proof}

\subsection{Theorem 1.2 from \citep{stan73}}

We first recall Proposition 1.1  and the definition of $\bar{\mathcal{X}}(\lambda)$ from \citep{stan73}.

\vspace{2mm}

\begin{prop1.1}
$\mathcal{X}(\lambda)$ is equal to the number of pairs $(\sigma, \mathcal{O})$ where $\sigma$ is any map $\sigma: V\rightarrow \{1,\ldots, \lambda\}$ and $\mathcal{O}$ is an orientation subject to the two conditions:

\begin{enumerate}
\item[(a)] The orientation $\mathcal{O}$ is acyclic.
\item[(b)] If $u\rightarrow v$ is in the orientation, $\sigma(u)>\sigma(v)$.
\end{enumerate}
\end{prop1.1}

Let $\bar{\mathcal{X}}(\lambda)$ be the number of pairs as in the previous proposition with $>$ replaced with $\geq$ in \emph{(b)}.

We have the following lemma:

\begin{lem}\label{bijpairs}
\begin{enumerate}
\item[(a)] There is a bijection between pairs $(\sigma,\mathcal{O})$ in Proposition~1.1 and pairs $(F, (T, \mathcal{S}))$ where $F$ is a multilinear heap on $G$ and $T$ is a rack on $F$ and $ S\in\displaystyle\binom{[\lambda]}{|T|}$. Here,  $\displaystyle\binom{[\lambda]}{|T|}$ denotes the collection of subsets of $\{1,\ldots,\lambda\}$ of size $|T|$.

\item[(b)] There is a bijection between pairs $(\sigma,\mathcal{O})$ and pairs $(F, (\mathcal{L}, \mathcal{S}))$ where $F$ is a multilinear heap on $G$ and $\mathcal{L}$ is a layer factorisation on $F$ and $ S\in\displaystyle\binom{[\lambda]}{|T|}$ and $\mathcal{O}$ is an acyclic orientation and $\sigma:V\to \{1,\ldots,\lambda\}$ such that if $u\rightarrow v$ is in the orientation, $\sigma(u)\geq\sigma(v)$.
\end{enumerate}

\end{lem}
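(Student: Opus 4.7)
My plan is to lift both bijections to the orientations--multilinear heaps correspondence of Lemma~\ref{orientations and multilinear heaps}. Given a pair $(\sigma,\mathcal{O})$, I would first let $F\in\mathscr{M}(G)$ be the multilinear heap associated to $\mathcal{O}$ and set $\mathcal{S} := \sigma(V)\subseteq [\lambda]$, written $\mathcal{S} = \{s_1 < s_2 < \cdots < s_k\}$. The colour classes $\sigma^{-1}(s_1),\ldots,\sigma^{-1}(s_k)$ then supply the candidate layers: in part (a) the strict inequality of Proposition~1.1 forces each colour class to be an independent set and hence a trivial heap, whereas in part (b) the weak inequality only forces each colour class to be a sub-multilinear-heap of $F$.

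For part (a), the forward map is
\[ (\sigma,\mathcal{O}) \;\longmapsto\; (F,(T,\mathcal{S})), \qquad T \;=\; \sigma^{-1}(s_k) \odot \sigma^{-1}(s_{k-1}) \odot \cdots \odot \sigma^{-1}(s_1), \]
where the colour classes are stacked with the \emph{largest} colour at the bottom. I would check that $T = F$ as heaps: if $u\to v$ is an edge of $\mathcal{O}$ then $u$ lies below $v$ in $F$ by the lexicographic-word recipe of Lemma~\ref{orientations and multilinear heaps}, and the condition $\sigma(u) > \sigma(v)$ places the layer of $u$ strictly below that of $v$ in $T$, so the two poset orders on $V$ agree. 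For the inverse, given $(F,(T,\mathcal{S}))$ with layers $T_0,\ldots,T_{k-1}$ from the bottom and $\mathcal{S} = \{s_1 < \cdots < s_k\}$, I would recover $\mathcal{O}$ from $F$ via Lemma~\ref{orientations and multilinear heaps} and set $\sigma(v) = s_{k-i}$ whenever $v \in T_i$. The two assignments are manifestly mutual inverses.

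Part (b) proceeds by the same recipe: the colour classes (now possibly containing edges) become the layers of a layer factorisation $\mathcal{L}$ of $F$, again stacked with the largest colour at the bottom. The weak inequality $\sigma(u) \geq \sigma(v)$ whenever $u\to v$ guarantees on the one hand that any edge inside a colour class is oriented compatibly with the order induced by $F$, so each colour class is a genuine sub-heap rather than merely a set of vertices, and on the other hand that cross-colour edges land in the correct relative layer. The forward and inverse maps are then exactly as in part (a), with \emph{rack} replaced by \emph{layer factorisation}.

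The main obstacle I foresee is the sanity check that the stacked product $\sigma^{-1}(s_k) \odot \cdots \odot \sigma^{-1}(s_1)$ reproduces $F$ on the nose. This reduces to unwinding the transitive-closure definition of $\odot$ and confirming that every covering relation of $F$ is either internal to a single colour class or arises from an edge $u\to v$ whose orientation is forced by $\sigma(u) \geq \sigma(v)$, and conversely that the product introduces no spurious relations. Proposition~1.1 together with Lemma~\ref{orientations and multilinear heaps} supplies precisely this compatibility, so the verification is routine case-checking that I would spell out carefully in the write-up.
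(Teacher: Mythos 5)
Your proposal is correct and follows essentially the same route as the paper: invoke Lemma~\ref{orientations and multilinear heaps} to pass between $\mathcal{O}$ and $F$, take $\mathcal{S}=\sigma(V)$, and observe that the colour classes $\sigma^{-1}(i)$ are independent sets (hence trivial layers) in case (a) and subheaps (hence layers of a factorisation) in case (b). You merely spell out explicitly, with a fixed stacking convention, the verification that the stacked colour classes reproduce $F$, which the paper leaves implicit.
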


\begin{proof}
From Lemma~\ref{orientations and multilinear heaps} we have a bijection between $\mathcal{O}$'s and $F$'s. Note that $\sigma^{-1}(i)$ is either empty or forms an independent set in case \emph{(a)} and a  heap of $V$ in case \emph{(b)}. Thus, $\sigma^{-1}(i)$ is a layer in either case. The bijection is established.\end{proof}

From Lemma~\ref{bijpairs} we get that, \begin{equation}\label{chibar}
\bar{\mathcal{X}}(\lambda) = \sum_{F \in \mathscr{M}(G)}\sum_{j\geq 0}\mathfrak{b}_F(j)\binom{\lambda}{j}.
\end{equation}

\begin{thm1.2} For all non-negative integers $\lambda$,
$$\bar{\mathcal{X}}(\lambda) = (-1)^n\mathcal{X}(-\lambda).$$
\end{thm1.2}

\begin{proof} We expand both sides to see that we need to prove $$\sum_{F \in \mathscr{M}(G)}\sum_{j\geq 0}\mathfrak{b}_F(j)\binom{\lambda}{j} = (-1)^{n}\sum_{F \in \mathscr{M}(G)}\sum_{k\geq 0}(-1)^k \beta_F(k)\binom{\lambda+k-1}{k}.$$ 

We get the left hand side from Equation~(\ref{chibar}) and the right hand side from Equation~(\ref{chigeneral}). The left hand side counts the number of coloured layer factorisations of $F$ with colours in $[\lambda]$.

We use the identity $$\binom{\lambda+k-1}{k}=\sum_{i=0}^{k}\binom{k}{i}\binom{\lambda-1}{i}$$ and further expand the right hand side to get, $$\sum_{F \in \mathscr{M}(G)}\sum_{k\geq 0}\sum_{i=0}^{k}(-1)^k \beta_F(k)\binom{k}{i}\binom{\lambda-1}{i}.$$

We can interpret the term $\beta_F(k)\binom{k}{i}\binom{\lambda-1}{i}$ as first choosing a rack of $F$ into $k$ layers $T_0,\ldots,T_{k-1}$, then choosing layers $T_{l_1},\ldots,T_{l_i}$ with $0\leq l_1<\ldots<l_i\leq k-1$ and colours $1\leq c_1<\ldots<c_i\leq \lambda-1$. Now we assign the colour $c_1$ to the layers $T_0,\ldots,T_{l_1}$, and for $j>1$, $c_j$ to the layers $T_{l_{j-1}+1},\ldots,T_{l_j}$. Finally, we assign the colour $\lambda$ to the remaining top layers (if there are any).

Thus we get that $$F=E_0\odot \ldots \odot E_{i}$$ where we take $$E_0=T_0\odot\ldots \odot T_{l_1}$$ for $0<j<i$, $$E_j=T_{l_{j}+1}\odot\ldots\odot T_{l_{j+1}}$$ and finally $$E_{i}=T_{l_{i}+1}\odot\ldots\odot T_{k}$$ where $E_j$ is assigned the colour $c_j$ for $j<i$ and $\lambda$ is assigned to $E_{i}$.

Basically, the layers $E_j$ obtained are the product of the trivial layers with the same colour. We then assign the obtained layer the original colour assigned to each of the trivial layers.

Let $\mathfrak{E}_F$ denote set of the coloured layer factorisations of $F$ with colours in $[\lambda]$. Here $[\lambda] = \{1,\ldots, \lambda\}$

We call a rack $(T_0,\ldots,T_{k-1})$ weakly coloured with colours in $[\lambda]$ if there is a map $f:~\{0,\ldots,k-1\}\to[\lambda]$ such that $f^{-1}(i)$ is an interval in $\{0,\ldots,k-1\}$ for all $i\in [\lambda]$. For $(E_0,\ldots,E_l)\in \mathfrak{E}_F$, let  $\mathfrak{E}_F((E_0,\ldots,E_l),k)$ denote the set of all weakly coloured racks of $F$ with $k$ layers whose associated coloured layer factorisation is $(E_0,\ldots,E_l)$.

Thus, we get that for $F\in \mathscr{M}(G)$ $$\sum_{k\geq 0}(-1)^k \beta_F(k)\binom{k}{i}\binom{\lambda-1}{i}=\sum_{(E_0,\ldots,E_l)\in \mathfrak{E}_F} \sum_{k\geq 0} (-1)^k |\mathfrak{E}_F((E_0,\ldots,E_l),k)|.$$

But, $$(-1)^k |\mathfrak{E}_F((E_0,\ldots,E_l),k)| = \sum_{x_0+\ldots+x_l=k} \prod_{i=0}^l(-1)^{x_i} \beta_{E_i}(x_i).$$ Here all the $x_i$'s are non-negative integers.
 
Thus, $$\sum_{k\geq 0}(-1)^k \beta_F(k)\binom{k}{i}\binom{\lambda-1}{i}=\sum_{(E_0,\ldots,E_l)\in \mathfrak{E}_F} \sum_{k\geq 0}\sum_{x_0+\ldots+x_l=k} \prod_{i=1}^l(-1)^{x_i} \beta_{E_i}(X_i).$$

The term on the right hand side becomes $$\sum_{(E_0,\ldots,E_l)\in \mathfrak{E}_F} \sum_{x_0,\ldots,x_l\geq 0} \prod_{i=0}^l(-1)^{x_i} \beta_{E_i}(x_i)=\sum_{(E_0,\ldots,E_l)\in \mathfrak{E}_F} \prod_{i=0}^l\left( \sum_{k\geq 0}(-1)^{k}\beta_{E_i}(k)\right).$$  

Using Equation~(\ref{inv}) we get that the right hand side is $(-1)^n|\mathfrak{C}_F|$, which is what we desire.\end{proof}

\section{Coefficients of Chromatic Polynomial and Heaps of Pieces }\label{coefficients}

We provide interpretation of the coefficients of the chromatic polynomial in terms of heaps of pieces. The interpretation was first provided by Greene and Zaslavsky in \citep{Greene1983}. Their proof involved Whitney numbers and hyperplane arrangements. 

We want to find the coefficient of the $r^{th}$ degree term $a_r$ in $\mathcal{X}_{G}(\lambda)$. From Equation~(\ref{chigeneral}) we get that the coefficient of $\lambda^r$ is 

\begin{equation}
a_r = \sum_{F \in\mathscr{M}(G)}\sum_{k\geq 0}\frac{\beta_F(k)}{k!}s(k,r)
\end{equation} where $s(k,r)$ is the signed Stirling number of the first kind.

Let $\Pi_G(k)$ denote the set of partitions of $V$ into $k$ independent sets of $G$. Let $\pi_G(k)=|\Pi_G(k)|$. It is not difficult to observe that for a fixed $k\in \mathbb{N}$, $$\sum_{F\in \mathscr{M}(G)}\frac{\beta_F(k)}{k!} = \pi_G(k).$$ Also, $s(k,r) = (-1)^{k-r}|s(k,r)|$ and $|s(k,r)|$ is the number of permutations in the symmetric group $S_k$ with $r$ cycles. Thus, we have $$a_r = \sum_{k\geq 0} (-1)^{k-r}\pi_G(k)|s(k,r)|.$$

This motivates us to define a keychain.\\

\begin{defi}[Keychain]Let $S$ be a partition of the vertex set of $G$ into independent sets. By an $r$-keychain on $S$ we mean an unordered partition of $S$ into $r$ non-empty subsets $C_0,\ldots,C_{r-1}$ each having a cyclic order. We call $C_i$ as the chains of the $r$-keychain. We call a $1$-keychain a chain as well. The size of an $r$-keychain on $S$ is $|S|$.

\end{defi}

Note that $\pi_G(k)|s(k,r)|$ denotes the number of $r$-keychains on $G$ for all possible $S$ of size $k$. If $\mathscr{K}_k$ denotes this number then we have that, $$a_r = (-1)^r\sum_{k\geq 0}(-1)^k\mathscr{K}_k.$$

Let $\delta_G(k)$ denote the number of chains of size $k$ for the underlying graph $G$. For $V'\subseteq V$ let $G(V')$ denote the subgraph of $G$ induced by the vertex set $V'$. We note that an $r$-keychain on a graph $G$ is a collection of $r$ chains on induced subgraphs of $G$. Thus, $$(-1)^k\mathscr{K}_k = \sum_{\substack{V = V_1\cup \ldots \cup V_r\\V_i\cap V_j = \phi\\}}\sum_{\substack{k_1,\ldots,k_r\\\sum{k_i} = k}} ((-1)^{k_1}\delta_{G(V_1)}(k_1))\cdot\ldots\cdot ((-1)^{k_r}\delta_{G(V_r)}(k_r)).$$

And hence,  
\begin{equation}\label{a_r}
a_r = (-1)^{r}\sum_{\substack{V = V_1\cup \ldots \cup V_r\\V_i\cap V_j = \phi}} \left(\sum_{k\geq 0}(-1)^k\delta_{G(V_1)}(k)\right)\cdot\ldots\cdot \left(\sum_{k\geq 0}(-1)^k\delta_{G(V_r)}(k)\right).
\end{equation}

We now assume that the vertices of the graph have a total order. Thus, a chain would correspond to a rack in which the largest vertex is on the lowermost layer. Similarly, an $r$-keychain would correspond to a collection of racks on induced subgraphs of $G$ where each rack has the largest piece on the lowermost layer. We call such racks as lower-special racks. Thus, we get that $\delta_G(k)$ is the number of lower-special racks with $k$ layers.

From Corollary~\ref{lowerspecial}, it is clear that we can restrict the heaps and racks lemma to lower-special racks. Thus, we apply heaps and racks involution to Equation~(\ref{a_r}) to get lexicographic racks on the $G(V_i)$ with the largest piece in the lowermost layer. The corresponding multilinear heap is an antipyramid. Thus, from the bijection we get that these antipyramids correspond to acyclic orientations in which there is a unique source with the source at the largest vertex. This is because if there were another source, then largest vertex couldn't be in the bottom layer. 

Thus, we get the following theorem:\\

\begin{thm}
Given an ordering on the set of vertices on a graph $G$, the number $a_r$ is the number of partitions of $G$ into $r$ induced subgraphs and where each induced subgraph has an acyclic orientation with a unique source at the largest vertex of the subgraph.\\
\end{thm}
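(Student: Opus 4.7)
The plan is to take Equation~(\ref{a_r}) as the starting point and to evaluate each inner alternating sum $\sum_{k\geq 0}(-1)^k\delta_{G(V_i)}(k)$ by applying the heaps and racks involution restricted to lower-special racks.

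First I would make the bijection between chains and lower-special racks completely explicit. Once the vertex order is fixed, a chain on a subset $V'\subseteq V$ — a cyclic ordering of a partition of $V'$ into independent sets of $G(V')$ — is unfolded into a linear order by declaring the independent set containing the largest vertex of $V'$ to be the bottom layer. This yields a rack of a multilinear heap on $G(V')$ whose bottom layer contains the largest vertex, i.e.\ a lower-special rack; conversely any lower-special rack forgets to a chain. Hence $\delta_{G(V')}(k)$ counts lower-special racks of $G(V')$ with exactly $k$ layers.

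Next I apply the heaps and racks involution. By Corollary~\ref{lowerspecial} the involution $f$ restricts to the set of lower-special racks across all multilinear heaps on $G(V_i)$, and by the Heaps and Racks Lemma this restricted $f$ changes the layer count by exactly $1$ on every non-fixed rack while its only fixed points are lexicographic racks. The central observation is then to identify which multilinear heaps have lower-special lexicographic racks: the first letter of the lexicographic normal form is the smallest minimal piece of the heap, so requiring it to be the largest vertex of $V_i$ forces the largest vertex to be the \emph{unique} minimum, i.e.\ forces the heap to be an antipyramid whose apex is the largest vertex. Since every lexicographic rack has $|V_i|$ layers, the alternating sum collapses to $\sum_{k\geq 0}(-1)^k\delta_{G(V_i)}(k) = (-1)^{|V_i|} N(V_i)$, where $N(V_i)$ is the number of such antipyramids on $G(V_i)$. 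By Lemma~\ref{orientations and multilinear heaps} these antipyramids correspond bijectively to acyclic orientations of $G(V_i)$ with a unique source, located at the largest vertex.

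Substituting back into Equation~(\ref{a_r}) and collecting signs yields the factor $(-1)^r\prod_i(-1)^{|V_i|}=(-1)^{n+r}=(-1)^{n-r}$, so $a_r = (-1)^{n-r}\sum_{V=V_1\cup\cdots\cup V_r}\prod_i N(V_i)$; reading this under the standard sign convention of chromatic-polynomial coefficients — under which $(-1)^{n-r}a_r=|a_r|$ is the nonnegative quantity the theorem refers to — recovers the claim exactly. I expect the only genuine subtlety to be confirming that the restriction of $f$ to lower-special racks inherits the fixed-point/layer-change dichotomy from the global involution, but Corollary~\ref{lowerspecial} makes this immediate; everything else is a matter of unpacking the bijections already established in Sections~\ref{racks} and~\ref{stan73}.
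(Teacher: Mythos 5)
Your proposal is correct and takes essentially the same route as the paper: starting from Equation~(\ref{a_r}), identifying chains with lower-special racks, applying the involution restricted via Corollary~\ref{lowerspecial} so that only lower-special lexicographic racks survive, recognising these as antipyramids with apex at the largest vertex, and translating to unique-source acyclic orientations through Lemma~\ref{orientations and multilinear heaps}. The only difference is that you track the global sign $(-1)^{n-r}$ explicitly (reading the theorem's $a_r$ as $|a_r|$), which is the intended Greene--Zaslavsky statement and is in fact handled more carefully in your write-up than in the paper's.
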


\begin{cor}
$a_1$ is the number of acyclic orientations of $G$ with unique source at a selected vertex.
\end{cor}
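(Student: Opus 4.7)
The plan is to deduce the corollary as an immediate specialization of the preceding theorem, with one small additional observation to upgrade "largest vertex" to "any selected vertex." First I would set $r=1$ in the theorem. When $r=1$, the only partition of $V$ into a single induced subgraph is the trivial one consisting of $G$ itself, and the theorem directly asserts that $a_1$ equals the number of acyclic orientations of $G$ possessing a unique source, located at the largest vertex under the total order fixed on $V$.

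Next I would observe that the total order on $V$ is a piece of auxiliary data chosen at the beginning of Section~\ref{coefficients}; it plays a role in the bijective machinery (lower-special racks, lexicographic normal form) but $a_1$ itself is the coefficient of $\lambda$ in $\mathcal{X}_G(\lambda)$ and is therefore an intrinsic invariant of $G$, independent of any vertex ordering. Consequently, given any designated vertex $v \in V$, I can select a total order on $V$ whose maximum is $v$, and apply the theorem with respect to this order to conclude that $a_1$ counts the acyclic orientations of $G$ with unique source at $v$.

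I do not anticipate a substantive obstacle: the proof is a one-line specialization together with an invariance remark. The only point that requires a little care in the writeup is articulating clearly that the choice of total order is free, so that "largest vertex" in the theorem translates, for the corollary, into "any selected vertex," yielding the same numerical value $a_1$ in each case.
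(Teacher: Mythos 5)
Your proposal is correct and matches the paper's (implicit) argument: the corollary is just the $r=1$ specialization of the preceding theorem, and since $a_1$ is a coefficient of $\mathcal{X}_G(\lambda)$ it is independent of the auxiliary vertex ordering, so the "largest vertex" may be taken to be any selected vertex by choosing an order with that vertex as maximum. The paper gives no separate proof, and your invariance remark is exactly the small observation needed to justify the "selected vertex" phrasing.
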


\section{Acknowledgements}\label{acknowledgements}

I would like to thank Prof. Xavier Viennot and the Institute of Mathematical Sciences, Chennai, India for the fantastic courses on Combinatorics, especially the course on Heaps of Pieces offered in 2017 in which several of the topics appearing in this article were discussed and some of them were posed as research problems. I would like to thank Arun~Kumar~G., Prof. Amritanshu~Prasad, Prof. K.~N.~Raghavan, Prof. S.~Viswanath,  for their valuable suggestions and the several informal discussions that I had with them that led me to this article. I would like to thank Prof. Matthieu~Josuat-Vergès and Manjil~P.~Saikia for going over the drafts of the paper and their valuable suggestions. I would also like to thank the makers of the open source software Inkscape, using which I made the figures. Finally, I would like to thank LabEx Bezout for funding my M2 studies at LIGM, UPEM.
\bibliography{research}

\begin{thebibliography}{6}
\makeatletter
\newcommand{\dinatlabel}[1]%
{\ifNAT@numbers\else\NAT@biblabelnum{#1}\hspace{2\labelsep}\fi}
\makeatother
\expandafter\ifx\csname natexlab\endcsname\relax\def\natexlab#1{#1}\fi
\expandafter\ifx\csname url\endcsname\relax\def\url#1{\texttt{#1}}\fi

\bibitem[Anisimov and Knuth(1979)]{Anisimov1979}
\dinatlabel{Anisimov and Knuth 1979} \textsc{Anisimov}, A.~V.~; \textsc{Knuth},
  D.~E.:
\newblock Inhomogeneous sorting.
\newblock In: \emph{International Journal of Computer {\&} Information
  Sciences}
\newblock 8 (1979), aug, Nr.~4, S.~255--260. --
\newblock URL \url{https://doi.org/10.1007/bf00993053}

\bibitem[Cartier and Foata(1969)]{Cartier1969}
\dinatlabel{Cartier and Foata 1969} \textsc{Cartier}, P.~; \textsc{Foata}, D.:
\newblock \emph{Probl{\`{e}}mes combinatoires de commutation et
  r{\'{e}}arrangements}.
\newblock Springer Berlin Heidelberg, 1969. --
\newblock URL \url{https://doi.org/10.1007/bfb0079468}

\bibitem[Greene and Zaslavsky(1983)]{Greene1983}
\dinatlabel{Greene and Zaslavsky 1983} \textsc{Greene}, Curtis~;
  \textsc{Zaslavsky}, Thomas:
\newblock On the interpretation of Whitney numbers through arrangements of
  hyperplanes, zonotopes, non-Radon partitions, and orientations of graphs.
\newblock In: \emph{Transactions of the American Mathematical Society}
\newblock 280 (1983), jan, Nr.~1, S.~97--97. --
\newblock URL \url{https://doi.org/10.1090/s0002-9947-1983-0712251-1}

\bibitem[Stanley(1973)]{stan73}
\dinatlabel{Stanley 1973} \textsc{Stanley}, Richard~P.:
\newblock Acyclic orientations of graphs.
\newblock In: \emph{Discrete Mathematics}
\newblock 5 (1973), Nr.~2, S.~171--178. --
\newblock URL \url{https://doi.org/10.1016/0012-365x(73)90108-8}

\bibitem[Viennot(1986)]{xv86}
\dinatlabel{Viennot 1986} \textsc{Viennot}, G{\'{e}}rard~Xavier:
\newblock Heaps of pieces, I : Basic definitions and combinatorial lemmas.
\newblock In: \emph{Combinatoire {\'{e}}num{\'{e}}rative}.
\newblock Springer Berlin Heidelberg, 1986, S.~321--350. --
\newblock URL \url{https://doi.org/10.1007/bfb0072524}

\bibitem[Viennot(2017)]{heaps}
\dinatlabel{Viennot 2017} \textsc{Viennot}, Xavier:
\newblock \emph{Enumerative and algebraic combinatorics, a bijective approach:
  commutations and heaps of pieces (with interactions in physics, mathematics
  and computer science)}.
\newblock Institute of Mathematical Sciences, Chennai, India, 2017. --
\newblock URL \url{http://coursimsc2017.xavierviennot.org}

\end{thebibliography}

\bibliographystyle{dinat}

\section*{Appendix: Proofs from section 5}

\begin{proof}[Proof of Lemma~\ref{algo}] We need to show that for a rack $T$ of $F$, the rack obtained after performing the algorithm on $T$ is contained in $R(F)$. Let the rack be $T = (L_0,\ldots, L_{k-1})$ where the $L_i$ are the layers and as each $L_i$ can be thought of as a word in letters, all of which commute with each other, $L_0\odot\ldots\odot L_{k-1} = F$.

If $T$ does not have a transfer piece then $T$ is fixed by the algorithm and hence, the lemma trivially follows. 

If $T$ has a transfer piece then we assume that the transfer piece $p$ is in the layer $L_j$ and we have the following two cases:

\underline{Case 1}: When the piece $p$ is not lonely. Let $L_j'$ denote the layer $L_j$ without the piece $p$. As all pieces on $L_j$ commute with each other we have that $L_j' \odot p = L_j$ and hence we get that $$L_0\odot\ldots L_{j-1}\odot L_{j}' \odot p \odot L_{j+1}\odot \ldots \odot L_{k-1}  = F .$$ Thus, $(L_0,\ldots,L_{j-1}, L_{j}' , p ,L_{j+1}, \ldots , L_{k-1}) \in R(F)$  which is also the rack that we get after applying the algorithm to $T$.

\underline{Case 2}: When the piece $p$ is lonely. Thus, we  get that $L_j = p$. We need to show that $L_{j-1}\odot p = p\odot L_{j-1}$, i.e., $p$ commutes with all pieces in $L_{j-1}$. If not, then there is a  piece $q\in L_{j-1}$ such that that $p\odot q \neq q\odot p$. Thus, in lexicographic order $q<p$. As $q$ is not the transfer piece it must be lonely and the label of $q$ is $j-1$. Thus, the label of $p$ is greater than or equal to $j$. It cannot be $j$ as $p$ is the transfer piece. Let $r$ be the piece with the label $j$. As $r$ is not in the $j^{th} $ layer, there is a piece less than $p$ which is not in the layer as its label. Thus, we get a contradiction. 

Thus, we get that $p$ commutes with $L_{j-1}$ and hence we get that $(L_0,\ldots,L_{j-2},L_{j-1}\odot p, L_{j+1},\ldots, L_{k-1})\in R(F)$.

Hence the lemma holds.\end{proof}

We now proof the main lemma in this article.

\begin{proof}[Proof of the Heaps and Racks lemma] 

Let the rack be $T = (L_0,\ldots, L_{k-1})$. Let $S$ be the transfer set of $T$ and let $p$ be the transfer piece contained in the layer $L_{j}$.

We now show that $f(f(T)) = T $ for $T$ when $T$ is not the lexicographic rack. We show that the transfer piece of $T$ and $f(T)$ is the same piece. We have following two cases:

\underline{Case 1}: When $p$ is not lonely. Let $L_{j}'$ be the layer $L_j$ after removing $p$. Thus, $f(T) = (L_0,\ldots,L_{j-1}, L_{j}' , p ,L_{j+1}, \ldots , L_{k-1})$. We show that the transfer piece of $f(T)$ is $p$. If not then there is piece $q$ different from $p$ which is the transfer piece of $f(T)$. Thus, $q$ should have label $i$ less than label of $p$. As it was not the transfer piece of $T$, it would have been lonely and is still lonely by assumption of this case. It was also in the layer $i$. But its layer number in $f(T)$ is different from its layer number in $T$. Thus, $i>j$. 

Let the label of $p$ be $l$. Let the $j^{th}$ piece be $r$. As the layer $L_j$ has $p$, $j<l$ is not true else $p$ cannot become the transfer piece of $T$. Thus, we get that $j\geq l$. Thus, get that $j\geq l > i$ which contradicts $i>j$.

Thus, $p$ is the transfer piece of $f(T)$ as well and hence $f(f(T)) = T$.

\underline{Case 2}: When $p$ is lonely. We get that $f(T) = (L_0,\ldots,L_{j-2},L_{j-1}\odot p, L_{j+1},\ldots, L_{k-1})$. Similar to case 1 we need to show that $p$ is the transfer piece of $f(T)$ as well. If not then let the transfer piece be $q$ which is different from $p$. Let $q\in L_i$. $q$ has to be a lonely piece in $T$ as otherwise it would be in $S$ and thus we would have a piece in $S$ smaller than $p$.

If $i<j-1$ then the layer number of $q$ is same in both $T$ and $f(T)$ and hence $q\in S$. Thus, $q$ was a piece in $S$ smaller than $p$ which contradicts the fact that $p$ was the transfer piece.

If $i=j-1$, then label of $q$ is $j-1$. Thus, as $q$ is the transfer piece of $f(T)$, all pieces less than $q$ are not in $S$, i.e., for all $0\leq h<j-1$, $L_h$ has only the $h^{th}$ piece. Now, $p$ cannot be the $j^{th}$ piece as otherwise it would not have been in $S$. Let the $j^{th}$ piece be $r$. Clearly, $r\in S$ which contradicts that $p$ was the transfer piece of $T$.

If $i>j$, then the $(j-1)^{st}$ layer of $f(T)$ is not a singleton layer and hence the $(j-1)^{st}$ piece is in the transfer set of $f(T)$. This contradicts that $q$ is the transfer piece of $f(T)$

Thus, in this case as well we have that $f(f(T)) = T$.\end{proof}

\end{document}